\newtheorem{theorem}{Theorem}[section]
\newtheorem{lemma}{Lemma}[section]
\newtheorem{remark}{{\sc Remark}}[section]
\newtheorem{example}{Example}[section]
\newcommand\blfootnote[1]{
  \begingroup
  \renewcommand\thefootnote{}\footnote{#1}
  \addtocounter{footnote}{-1}
  \endgroup
}
\DeclareMathOperator{\diag}{diag}
\DeclareMathOperator{\rand}{rand}
\title{Perturbation analysis of the extinction probability of\\ a Markovian binary tree
\thanks{This research was supported in part by NSFC under grant 11301013.} }
\author[1]{Pei-chang Guo}
\author[1]{Yun-feng Cai}
\author[2]{Jiang Qian}
\author[1]{Shu-fang Xu}
\affil[1]{LMAM \& School of Mathematical Sciences, Peking University, Beijing 100871, China}
\affil[2]{School of Sciences, Beijing University of Posts and Telecommunications, Beijing 100876, China}
\date{ }
\begin{document}
\maketitle
\blfootnote{Email addresses: gpeichang@126.com (P.C. Guo)}

\vspace{-14mm}
\begin{abstract}
The extinction probability of the Markovian Binary Tree (MBT) is the minimal nonnegative solution of a Quadratic Vector Equation (QVE).
In this paper,  we present a perturbation analysis for the extinction probability of a supercritical MBT.
We derive a perturbation bound for the minimal nonnegative solution of the QVE,
and an error bound is also given, which can be used to measure the quality of an approximation solution.
Numerical tests show that these bounds are fairly sharp.

\vspace{2mm} \noindent \textbf{Keywords.} perturbation analysis, Markovian binary tree, quadratic vector equation, minimal nonnegative solution

%\vskip2mm
%
%\noindent {\bf MSC.}  65H10, 15A12.

\end{abstract}

\section{Introduction}\label{sec:intro}
We first introduce some necessary notations for this paper.
For matrices $A=[a_{ij}], B=[b_{ij}]\in \mathbb{R}^{m\times n}$, we write $A\geq B (A>B)$
if $a_{ij}\ge b_{ij} (a_{ij}>b_{ij})$ holds for all $i, j$.
For vectors $x,y \in \mathbb{R}^n$,
we write $x\geq y (x>y)$ if $x_i\geq y_i (x_i>y_i)$ holds for all $i=1, 2,\dots,n$.
For square matrix $A$, its spectral radius is denoted by $\rho(A)$.
The symbol $\|\cdot\|$ is used to denote the infinity-norm of the dumb matrix/vector unless otherwise specified.
The column vector of all ones is denoted by $e$, i.e., $e=\begin{bmatrix}1 & 1 & \dots & 1\end{bmatrix}^{\top}$.

In this paper, we consider the perturbation analysis of the extinction probability  of a Markovian Binary Tree (MBT).
MBTs belong to a special class of continuous-time Markovian multi-type branching processes \cite{athreya},
which are used to model the growth for populations consisting of several types of individuals who may reproduce and die during their lifetime.
Applications have been found  in biology and epidemiology \cite{appa,appb},
and telecommunication systems \cite{appc,appd}.
We refer the readers to \cite{deta,detb, newmod,detc} for a detailed description of MBTs.

For MBTs, the individuals give birth to only one child at a time and the life of each individual is controlled by a Markovian process,
called the phase process, on the state space $\{1,2,\dots,n\}$.
A fundamental problem is what the extinction probability is for all individuals will eventually die out at some time.
It is shown in \cite{deta} that
the extinction probability is the minimal nonnegative solution of the following Quadratic Vector Equation (QVE):
\begin{equation}\label{yuan}
x=a+B(x\otimes x),
\end{equation}
where $x=[x_i]\in\mathbb{R}^n$ is the unknown vector,
with $x_{i}$ the probability of a population starting from an individual in state $i$ becomes extinct,
$a=[a_i]\in \mathbb{R}^n$ is the coefficient vector of QVE \eqref{yuan}
with $a_i$ the probability of an individual in state $i$ dies out without producing offspring,
$B=[b_{i, n(j-1)+k}]\in \mathbb{R}^{n\times n^2}$ is the coefficient matrix of QVE \eqref{yuan}
with $b_{i, n(j-1)+k}$ the probability that an individual in phase $i$ eventually produces a child in state $j$
and the parent switches to phase $k$ after the birth.
The symbol $\otimes$ denotes the Kronecker product.
It is easy to see that all entries of $x$, $a$ and $B$ are between $0$ and $1$.
Furthermore,  using the fact that the probabilities of all possible outcomes for an individual in state $i$ must sum to $1$,
for $i = 1, 2, \cdots, n$, we know that the vector $e$ must be a solution of \eqref{yuan}, i.e.,
\begin{equation}\label{rest}
e=a+B(e\otimes e).
\end{equation}

Let
\begin{equation}\label{def:R}
R=B(I\otimes e+e\otimes I).
\end{equation}
An MBT is called {\em subcritical, supercritical, or critical}
if $\rho(R)$ is strictly less than $1$, strictly greater than $1$, or equal to $1$, respectively \cite{athreya}.
In the subcritical and critical cases, the minimal nonnegative solution of \eqref{yuan} is the vector of all ones $e$,
while in the supercritical case, the minimal nonnegative solution $x^{\ast}$ satisfies $x^{\ast}\leq e, x^{\ast}\neq e$ \cite{detb}.
%Therefore, only the supercritical case is of interest for the computation of $x^{\ast}$.
If $R$ in \eqref{def:R} is {\em irreducible}, then the MBT is {\em positive regular}.
In such case, either $x^{\ast} = 0$, or $x^{\ast} = e$, or $0 < x^{\ast} < e$.
When $R$ is reducible, Bini {\rm et. al}\cite{nlaa} show how to reduce the QVE into several smaller size QVEs
whose associated matrix $R$ is irreducible.
%In \cite{poloni}, Poloni show how to detect zero entries in $x^{\ast}$ and
%reduce the corresponding QVE into several smaller QVEs with strictly positive solutions.
Hereafter, we will only concentrate in the supercritical positive regular MBT,
whose extinction probability -- the minimal nonnegative solution $x^{\ast}$ of the corresponding QVE,
is strictly less than $e$, i.e., $0\le x^{\ast}<e$.

Efforts have been devoted to the computation of the minimal nonnegative solution of QVE \eqref{yuan}.
For example, the depth and order algorithm \cite{deta}, the thickness algorithm \cite{detb},
which are all linearly convergent; the Newton's iteration \cite{newton},
%\begin{equation}\label{New}
%    x_{k+1}=(I-B(I\otimes x_k)-B(x_k\otimes I))^{-1}(a-B(x_k\otimes x_k)),
%\end{equation}
which is shown to be quadratically convergent for any initial $0\le x_0\le a$;
%See also \cite{newmod} for several modified Newton's iterations.
the Perron vector iteration \cite{perron,nlaa}, which converges faster than Newton's iteration for close to critical problems
($\rho(R)$ close to 1 from the above).
All the above iterations enjoy probabilistic interpretations.
We refer the readers to \cite{poloni} and reference therein for more numerical methods.

%Let $\rho(R)=1+\delta$, when $\delta>0$ is small,
%numerically we found that for Newton's iteration,
%though proven to converge quadratically for any initial $0\le x_0\le a$,
%quadratic convergence can only be observed when $d=\|x_k-x^{\ast}\|$ is small,
%the smaller $\delta$ is, the smaller $d$ is.

In this paper, we study the perturbation analysis of the extinction probability of the supercritical positive regular MBT.
We give a perturbation bound and an error bound for extinction probability.
Numerical tests show that these bounds are fairly sharp.
The rest of this paper is organized as follows.
In Section~\ref{sec:bound},  a perturbation bound and an error bound for the extinction probability of the supercritical positive regular MBT are derived.
Numerical examples are presented in Section~\ref{sec:numer},
and conclusion remarks are given in Section~\ref{sec:conclusion}.

\section{The perturbation bound and error bound}\label{sec:bound}

In this section, we will derive a perturbation bound and an error bound for the extinction probability of the supercritical positive regular MBT.
First, we give some preliminary results, which will be used in the subsequent subsections.

\subsection{Preliminary}

We give three lemmas, the first two are about the supercritical positive regular MBT,
the last one is a fixed point theorem.
{\em Hereafter, if an MBT is Supercritical and Positive Regular, we will call it an SPR-MBT}.

\begin{lemma}\label{le1}\cite{athreya}
Let  $\mathcal{E}_n=\{x\in\mathbb{R} \, : \, 0 \leq x < e \}$.
If the MBT is an SPR-MBT,
the only solution of QVE \eqref{yuan} in $\mathcal{E}_n$ is the extinction probability of the MBT,
which is the minimal nonnegative solution $x^{\ast}$ of the equation \eqref{yuan}.
\end{lemma}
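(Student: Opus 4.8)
Only the uniqueness part needs an argument: the SPR-MBT hypotheses recalled above already give $0\le x^{\ast}<e$, so $x^{\ast}$ does lie in $\mathcal{E}_n$ and is, by construction, the minimal nonnegative solution of \eqref{yuan}. So the plan is to take an arbitrary solution $y\in\mathcal{E}_n$ of \eqref{yuan} and show $y=x^{\ast}$.

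First I would reduce the problem to a linear one. Since $x^{\ast}$ is the minimal nonnegative solution we have $x^{\ast}\le y<e$, so $d:=y-x^{\ast}\ge 0$. Subtracting the equation \eqref{yuan} written for $x^{\ast}$ from the one written for $y$, using the identity $y\otimes y-x^{\ast}\otimes x^{\ast}=y\otimes d+d\otimes x^{\ast}$ together with the mixed-product property of $\otimes$, one obtains the linear fixed-point relation
\begin{equation*}
d=N d,\qquad N:=B(y\otimes I)+B(I\otimes x^{\ast})\ \ (\text{entrywise }\ge 0).
\end{equation*}
Likewise, subtracting \eqref{yuan} at $x^{\ast}$ from \eqref{rest} and writing $u:=e-x^{\ast}$ (strictly positive, since $x^{\ast}<e$) gives
\begin{equation*}
u=M u,\qquad M:=B(e\otimes I)+B(I\otimes x^{\ast})\ \ (\text{entrywise }\ge 0).
\end{equation*}

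Next I would invoke Perron--Frobenius theory for nonnegative matrices. Because $M\ge 0$ has the strictly positive eigenvector $u$ for eigenvalue $1$, pairing with a nonnegative left Perron eigenvector of $M$ forces $\rho(M)=1$. On the other hand $y<e$ gives $N\le M$ entrywise, with $M-N=B\big((e-y)\otimes I\big)$ not identically zero (since $e-y>0$ and $B\neq 0$); and when $M$ is irreducible, a strict entrywise decrease strictly decreases the spectral radius, so $\rho(N)<\rho(M)=1$. But if $d\neq 0$ then $d\ge 0$ and $Nd=d$ force $\rho(N)\ge 1$, a contradiction; hence $d=0$ and $y=x^{\ast}$.

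The heart of the argument — and the only place positive regularity is used — is the irreducibility of $M$ (equivalently, that the strict inequality $N\le M$ genuinely drops the spectral radius below $1$). When $x^{\ast}>0$ the matrix $M$ has exactly the zero pattern of $R=B(I\otimes e+e\otimes I)$, hence inherits irreducibility from $R$, and the argument closes. The main obstacle is the degenerate case $x^{\ast}=0$ (equivalently $a=0$), and more generally solutions with some vanishing components, where $M$ may fail to inherit the pattern of $R$; there one must run a separate support argument — the zero set of a solution is invariant under the branching mechanism, so irreducibility of $R$ rules out a nontrivial proper zero set — or fall back on the classical route: prove $\rho\big(\Phi'(x^{\ast})\big)<1$ for the generating map $\Phi(x)=a+B(x\otimes x)$ in the supercritical positive-regular case, then combine the mean value theorem along the segment $[x^{\ast},y]$ with monotonicity of the Jacobian $\Phi'$ and nonnegativity of $(I-\Phi'(x^{\ast}))^{-1}$ to conclude $y\le x^{\ast}$. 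Either way, the statement is precisely the standard uniqueness-of-extinction-probability result for positive-regular supercritical multitype branching processes, which is why it is attributed to \cite{athreya}.
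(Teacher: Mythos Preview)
The paper does not give a proof of this lemma at all: it simply states ``The proof is omitted and we refer the reader to [Section~V.3, \cite{athreya}] for the details.'' So there is nothing to compare your argument against in the paper itself; you have supplied a self-contained proof where the authors chose only to cite the classical reference.

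Your argument is correct and is essentially the standard Perron--Frobenius route one finds in the branching-process literature. The reduction to the linear relations $d=Nd$ and $u=Mu$ via the factorisation $y\otimes y-x^{\ast}\otimes x^{\ast}=y\otimes d+d\otimes x^{\ast}$ is clean, and the chain $\rho(N)<\rho(M)=1$ (using that $u=e-x^{\ast}>0$ is a positive eigenvector of $M$, and that $N\le M$, $N\neq M$, with $M$ irreducible) is the right mechanism. Your observation that when $x^{\ast}>0$ the matrix $M=B(e\otimes I+I\otimes x^{\ast})$ has exactly the zero pattern of $R=B(e\otimes I+I\otimes e)$, and hence inherits irreducibility from the positive-regularity hypothesis, is correct and is precisely where P3 enters.

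You are also right to flag the boundary case $x^{\ast}=0$ (equivalently $a=0$) as the one place where the pattern argument for $M$ can fail. Both workarounds you sketch are valid; in particular the second one can be made to rest directly on Lemma~\ref{le2} of the present paper, which asserts $\rho\big(B(x^{\ast}\otimes I+I\otimes x^{\ast})\big)<1$ and the nonnegativity of $(I-B(x^{\ast}\otimes I+I\otimes x^{\ast}))^{-1}$, so no circularity is introduced. In short: your proposal is a complete and honest proof sketch that goes well beyond what the paper itself provides.
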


The proof is omitted and we refer the reader to \cite[Section V.3]{athreya} for the details of Lemma~\ref{le1}.
The following property is also given without proof as it is stated in \cite{detb,newton}.

\begin{lemma}\label{le2}\cite{detb,newton}
Let $x^{\ast}$ be the minimal nonnegative solution to QVE \eqref{yuan} and assume the associated MBT is an SPR-MBT.
Then
\begin{equation*}
    \rho(B(x^{\ast}\otimes I + I \otimes x^{\ast}))<1,
\end{equation*}
and $(I-B(x^{\ast}\otimes I + I \otimes x^{\ast}))^{-1}$ exists and is nonnegative.
\end{lemma}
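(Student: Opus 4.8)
The plan is to show that the strictly positive vector $e-x^\ast$ satisfies $M(e-x^\ast)<e-x^\ast$ componentwise, where $M:=B(x^\ast\otimes I+I\otimes x^\ast)$ is the nonnegative matrix in question, and then to read off $\rho(M)<1$ from the standard spectral-radius bound for nonnegative matrices. First I would subtract the defining equation $x^\ast=a+B(x^\ast\otimes x^\ast)$ from the identity $e=a+B(e\otimes e)$ in \eqref{rest} to get $e-x^\ast=B(e\otimes e-x^\ast\otimes x^\ast)$, and then expand the Kronecker term by the telescoping identity $e\otimes e-x^\ast\otimes x^\ast=e\otimes(e-x^\ast)+(e-x^\ast)\otimes x^\ast$. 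Since the MBT is an SPR-MBT we have $0\le x^\ast<e$, hence $e-x^\ast>0$, so the vector $e\otimes(e-x^\ast)+(e-x^\ast)\otimes x^\ast$ is strictly positive (its first summand already is). As $e-x^\ast=B\big(e\otimes(e-x^\ast)+(e-x^\ast)\otimes x^\ast\big)$ is then a strictly positive vector lying in the range of $B\ge 0$, no row of $B$ can be identically zero.

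Next, using the elementary identities $(x^\ast\otimes I)v=x^\ast\otimes v$ and $(I\otimes x^\ast)v=v\otimes x^\ast$ for $v\in\mathbb{R}^n$, I would compute $M(e-x^\ast)=B\big(x^\ast\otimes(e-x^\ast)+(e-x^\ast)\otimes x^\ast\big)$ and subtract this from the expression for $e-x^\ast$ obtained above. The term $(e-x^\ast)\otimes x^\ast$ cancels, leaving
\[
(I-M)(e-x^\ast)=B\big((e-x^\ast)\otimes(e-x^\ast)\big).
\]
Here $(e-x^\ast)\otimes(e-x^\ast)>0$, and since $B\ge 0$ has no zero row the right-hand side is strictly positive; thus $M(e-x^\ast)<e-x^\ast$ componentwise with $e-x^\ast>0$. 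The bound $\rho(M)\le\max_i (Mv)_i/v_i$, valid for any nonnegative matrix $M$ and any $v>0$, then yields $\rho(M)<1$. Finally, $M\ge 0$ together with $\rho(M)<1$ makes $I-M$ a nonsingular $M$-matrix, so $(I-M)^{-1}$ exists and equals the convergent Neumann series $\sum_{k\ge 0}M^k\ge 0$, which gives the second assertion.

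The manipulations are routine once the two Kronecker decompositions are aligned; the only delicate point — the main obstacle — is upgrading $(I-M)(e-x^\ast)\ge 0$ to the strict inequality $(I-M)(e-x^\ast)>0$, since only strictness produces $\rho(M)<1$ rather than merely $\rho(M)\le 1$. I dispose of this through the observation above that a strictly positive vector lying in the range of $B$ forces $B$ to have no zero row; alternatively one could invoke the irreducibility of $R=B(I\otimes e+e\otimes I)$ guaranteed by positive regularity, which likewise excludes a zero row of $B$ (a zero row of $B$ would make the corresponding row of $R$ vanish).
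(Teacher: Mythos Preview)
Your argument is correct. The paper itself does \emph{not} prove Lemma~\ref{le2}; it simply states the result with the citations \cite{detb,newton} and remarks that ``the following property is also given without proof as it is stated in \cite{detb,newton}.'' So there is no paper-side proof to compare with.

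Your route is the standard one and matches what the paper does implicitly in its Remark after Theorem~\ref{theo2}, where it derives the companion identity $B[(e+x^\ast)\otimes I+I\otimes(e+x^\ast)](e-x^\ast)=2(e-x^\ast)$ from the same subtraction $e-x^\ast=B(e\otimes e-x^\ast\otimes x^\ast)$. Your telescoping decomposition and the paper's symmetric one are equivalent, and the key positivity step---that $e-x^\ast>0$ forces $B$ to have no zero row, hence $B\big((e-x^\ast)\otimes(e-x^\ast)\big)>0$---is exactly what is needed to get the strict inequality $\rho(M)<1$ rather than just $\rho(M)\le 1$. Either justification you give for the no-zero-row claim (the range argument or the irreducibility of $R$) is valid.
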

%The following two equalities are easily verified and the left equality can be found in many %monographs about matrix theory.
%\begin{lemma}\label{le3}
%\begin{equation*}
%\| x \otimes y\|=\|x\|\|y\|, \quad \quad \|x\otimes x-e\otimes e\|=1-(1-\|e-x\|)^2.
%\end{equation*}
%\end{lemma}
The so called Leray-Schauder fixed point theorem is stated as follows.
\begin{lemma}\cite{function}
If $K$ is a nonempty compact convex set in a locally convex space $X$, and $f:K\rightarrow K$ is continuous, then $f(p)=p$ for some $p\in K$.
\end{lemma}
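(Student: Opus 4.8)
The plan is to reduce this infinite-dimensional fixed point statement to the classical Brouwer fixed point theorem by approximating $f$ with finite-dimensional continuous maps (Schauder-type projections), using crucially that $K$ is compact. First I would fix a continuous seminorm $p$ on $X$ and a number $\varepsilon>0$. By compactness of $K$ there are finitely many points $y_1,\dots,y_m\in K$ such that the $p$-balls $\{x:p(x-y_i)<\varepsilon\}$ cover $K$. Setting $\lambda_i(x)=\max\{0,\,\varepsilon-p(x-y_i)\}$ --- which is continuous and strictly positive exactly on the $i$-th ball --- and normalizing, I would define the Schauder projection $P_{p,\varepsilon}(x)=\bigl(\sum_j\lambda_j(x)\bigr)^{-1}\sum_i\lambda_i(x)\,y_i$, which is well defined and continuous on $K$ since the balls cover $K$. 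It maps $K$ into the finite-dimensional compact convex polytope $K_{p,\varepsilon}:=\operatorname{conv}\{y_1,\dots,y_m\}\subseteq K$, and because each $y_i$ occurring with positive weight satisfies $p(y_i-x)<\varepsilon$, convexity of $p$ gives $p\bigl(P_{p,\varepsilon}(x)-x\bigr)<\varepsilon$ for every $x\in K$.

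Next I would apply Brouwer's fixed point theorem to the continuous self-map $P_{p,\varepsilon}\circ f$ of $K_{p,\varepsilon}$, which is a compact convex subset of a finite-dimensional space. This yields a point $x_{p,\varepsilon}\in K_{p,\varepsilon}$ with $P_{p,\varepsilon}\bigl(f(x_{p,\varepsilon})\bigr)=x_{p,\varepsilon}$, hence $p\bigl(f(x_{p,\varepsilon})-x_{p,\varepsilon}\bigr)<\varepsilon$. Now let the index $(p,\varepsilon)$ run through the directed set of all pairs consisting of a continuous seminorm and a positive real, ordered by $(p,\varepsilon)\preceq(q,\delta)$ iff $q$ dominates $p$ pointwise and $\delta\le\varepsilon$. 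The resulting net $(x_{p,\varepsilon})$ lies in the compact set $K$, so some subnet converges to a point $p^{\ast}\in K$. Along this subnet $q\bigl(f(x_{p,\varepsilon})-x_{p,\varepsilon}\bigr)\to 0$ for every fixed continuous seminorm $q$; passing to the limit using the continuity of $f$ and of $q$ gives $q\bigl(f(p^{\ast})-p^{\ast}\bigr)=0$ for all $q$. Since $X$ is locally convex and Hausdorff, its continuous seminorms separate points, and therefore $f(p^{\ast})=p^{\ast}$.

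The step I expect to require the most care is the passage from the normed intuition to a genuine locally convex space: one must check that the Schauder projection really does approximate the identity in the chosen seminorm $p$ --- the key point being that $P_{p,\varepsilon}(x)$ is a convex combination of only those $y_i$ lying in the $p$-ball of radius $\varepsilon$ about $x$ --- and one must organize the limiting argument so that a single limit point $p^{\ast}$ serves all continuous seminorms simultaneously, which is exactly what the net indexed by (seminorm,\,$\varepsilon$) pairs accomplishes. If one only needs the Banach-space case (which is the instance actually invoked later in the paper), the argument simplifies considerably: a single norm replaces the seminorm net and an ordinary sequence $x_{1/k}$, together with the sequential compactness of $K$, suffices. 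I would present the locally convex version and record the normed special case as a corollary.
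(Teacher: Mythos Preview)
The paper does not prove this lemma at all; it is simply quoted from Rudin's \emph{Functional Analysis} as a known result and invoked later as a black box. Your proposal supplies a complete and correct proof---the standard Schauder--Tychonoff argument via finite-dimensional Schauder projections reducing to Brouwer's theorem, organized as a net over pairs (seminorm, $\varepsilon$) to handle the general locally convex setting. One small point worth making explicit is that applying Brouwer to $K_{p,\varepsilon}$ uses the fact that a finite-dimensional subspace of a Hausdorff topological vector space carries the Euclidean topology; Hausdorffness is implicit in Rudin's conventions but is not stated in the lemma as quoted.

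Your closing observation is on target and could be sharpened: in both places where the paper invokes this lemma (the proofs of Lemma~2.4 and Theorem~2.2), the set in question is a closed norm-ball in $\mathbb{R}^n$, so not even the Banach-space Schauder theorem is needed---Brouwer's theorem itself already does the job. Stating the result in full locally convex generality is thus considerable overkill for the paper's purposes, though your proof of that general version is sound.
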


\bigskip
To proceed our discussions, we summarize the properties of QVE \eqref{yuan} corresponding with MBT and SPR-MBT.
The coefficient vector $a$ and matrix $B$ of QVE \eqref{yuan} associated with an MBT satisfy
\begin{enumerate}
\item[P1.] the entries of $a$ and $B$ are all between $0$ and $1$;
\item[P2.] $e=a+B(e\otimes e)$.
\end{enumerate}
The coefficient vector $a$ and matrix $B$ of QVE \eqref{yuan} associated with an SPR-MBT satisfy P1, P2, and
\begin{enumerate}
\item[P3.] the matrix $R=B(I\otimes e+e\otimes I)$ is nonnegative irreducible matrix and $\rho(R)>1$.
\end{enumerate}

\subsection{A perturbation bound}

Now let's make a small perturbation to an SPR-MBT such that the perturbed MBT is still an SPR-MBT.
And the QVE associated with  the perturbed SPR-MBT is
\begin{equation}\label{perequation}
  \tilde{x}=\tilde{a}+\tilde{B}(\tilde{x}\otimes \tilde{x}).
\end{equation}
It is easy to see that $\tilde{a}\ge 0$, $\widetilde{B}\ge 0$ and
\begin{align}\label{perturequation}
e=\tilde{a}+\widetilde{B}(e\otimes e).
\end{align}
In what follows, we shall derive a perturbation bound for the extinction probability by the following two steps:
\begin{description}
\item[Step 1.] Prove that the perturbed QVE \eqref{perequation} has a solution $\tilde{x}$, and derive a bound for $\|\tilde{x}-x^{\ast}\|$;
\item[Step 2.] Prove that $\tilde{x}$ is the minimal nonnegative solution $\tilde{x}^{\ast}$ of the perturbed QVE \eqref{perequation}.
\end{description}

\bigskip

Let $\Delta B=\widetilde{B}-B$, for the minimal nonnegative solution  $x^{\ast}$ of QVE \eqref{yuan} and $\Delta x\in\mathbb{R}^n$, define
\begin{align}
L&=I-B(x^{\ast}\otimes I+I\otimes x^{\ast}), \label{eq:L}\\
\varphi (\Delta x)&=B(\Delta x\otimes \Delta x)+\Delta B(\Delta x\otimes x^{\ast}+x^{\ast} \otimes \Delta x)+\Delta B(\Delta x\otimes \Delta x),
\label{eq:phi}\\
\mu (\Delta x)&=L^{-1}(\Delta B(x^{\ast}\otimes x^{\ast}-e\otimes e)+\varphi (\Delta x)).\label{eq:mu}
\end{align}
Step 1 can be done by using Leray-Schauder fixed point theorem.

\begin{lemma} \label{yinli1}
Let $x^{\ast}$ be the extinction probability of an SPR-MBT.
Assume that the SPR-MBT is still an MBT after perturbation.
Let the perturbed QVE \eqref{perequation} be the QVE associated with the perturbed MBT.
Denote $\Delta a=\tilde{a}-a$, $\Delta B= \widetilde{B}-B$, and let
\begin{equation}\label{lbdd}
 \delta=\| \Delta B \|,\quad   \ell=\|L^{-1}\|, \quad \tilde{b}=\|B+\Delta B\|,\quad d=\|x^{\ast}\otimes x^{\ast}-e\otimes e\|,
\end{equation}
where $L$ is defined in \eqref{eq:L}.
If $\delta$ satisfies
\begin{equation}\label{ineq:del1}
    \|x^{\ast}\|\delta +\sqrt{\tilde{b}d \delta}\le \frac{1}{2\ell},
\end{equation}
then the perturbed QVE \eqref{perequation} has a solution $\tilde{x}$ and
\begin{equation*}
    \|\tilde{x}-x^{\ast}\|\leq \xi_{\ast},
\end{equation*}
where
\begin{equation}\label{xiaogen}
\xi_{\ast}=\frac{2\ell d\delta}{1-2\ell\delta \|x^{\ast}\|+\sqrt{(1-2\ell \delta \|x^{\ast}\|)^2-4\ell^2\tilde{b}d\delta }}.
\end{equation}
\end{lemma}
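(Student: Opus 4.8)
The plan is to reduce the existence claim to a fixed‑point problem for the map $\mu$ in \eqref{eq:mu} and then invoke the Leray--Schauder theorem. Writing $\Delta x=\tilde x-x^{\ast}$ and substituting $\tilde x=x^{\ast}+\Delta x$, $\tilde a=a+\Delta a$, $\tilde B=B+\Delta B$ into \eqref{perequation}, expanding $(x^{\ast}+\Delta x)\otimes(x^{\ast}+\Delta x)$, and subtracting the identity $x^{\ast}=a+B(x^{\ast}\otimes x^{\ast})$, I would arrive at
\begin{equation*}
L\,\Delta x=\Delta a+\Delta B(x^{\ast}\otimes x^{\ast})+\varphi(\Delta x),
\end{equation*}
with $L$ and $\varphi$ as in \eqref{eq:L}--\eqref{eq:phi}. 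Here I use that both the original and the perturbed coefficients satisfy property P2, so that subtracting \eqref{rest} from \eqref{perturequation} gives $\Delta a=-\Delta B(e\otimes e)$, whence $\Delta a+\Delta B(x^{\ast}\otimes x^{\ast})=\Delta B(x^{\ast}\otimes x^{\ast}-e\otimes e)$; and the quadratic terms produced by the expansion regroup, via $B(\Delta x\otimes\Delta x)+\Delta B(\Delta x\otimes\Delta x)=\tilde B(\Delta x\otimes\Delta x)$, into $\varphi(\Delta x)=\tilde B(\Delta x\otimes\Delta x)+\Delta B(\Delta x\otimes x^{\ast}+x^{\ast}\otimes\Delta x)$, matching \eqref{eq:phi}. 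Since $L^{-1}$ exists by Lemma~\ref{le2}, $\tilde x=x^{\ast}+\Delta x$ solves \eqref{perequation} precisely when $\Delta x=\mu(\Delta x)$.

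Next I would apply the fixed‑point theorem on $K=\{\Delta x\in\mathbb{R}^{n}:\|\Delta x\|\le\xi_{\ast}\}$, which is nonempty, compact, and convex, and on which $\mu$ is continuous (it is a polynomial map in $\Delta x$). The self‑map property $\mu(K)\subseteq K$ follows from a norm estimate: using $\|L^{-1}\|=\ell$, $\|\tilde B\|=\tilde b$, $\|\Delta B\|=\delta$, $d=\|x^{\ast}\otimes x^{\ast}-e\otimes e\|$, and the multiplicativity $\|u\otimes v\|=\|u\|\,\|v\|$ of the infinity norm under Kronecker products, one obtains
\begin{equation*}
\|\mu(\Delta x)\|\le\ell\bigl(\delta d+2\delta\|x^{\ast}\|\,\|\Delta x\|+\tilde b\,\|\Delta x\|^{2}\bigr)=:g(\|\Delta x\|).
\end{equation*}
Because $g$ is nondecreasing on $[0,\infty)$, it suffices to verify $g(\xi_{\ast})\le\xi_{\ast}$, i.e.\ that $\xi_{\ast}$ lies in $[\xi_-,\xi_+]$, the interval between the roots of the quadratic $q(\xi)=\ell\tilde b\,\xi^{2}-(1-2\ell\delta\|x^{\ast}\|)\xi+\ell\delta d$. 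Hypothesis \eqref{ineq:del1}, rewritten as $2\ell\sqrt{\tilde b d\delta}\le 1-2\ell\delta\|x^{\ast}\|$ with both sides nonnegative, squares to give $(1-2\ell\delta\|x^{\ast}\|)^{2}-4\ell^{2}\tilde b d\delta\ge 0$, so $q$ has real roots; rationalizing the smaller root shows it equals exactly the quantity $\xi_{\ast}$ in \eqref{xiaogen}, so $q(\xi_{\ast})=0$, equivalently $g(\xi_{\ast})=\xi_{\ast}$, and hence $\mu(K)\subseteq K$.

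Finally, the Leray--Schauder theorem provides $\Delta x\in K$ with $\Delta x=\mu(\Delta x)$; then $\tilde x=x^{\ast}+\Delta x$ is a solution of \eqref{perequation} with $\|\tilde x-x^{\ast}\|=\|\Delta x\|\le\xi_{\ast}$, which is the assertion. I expect the main obstacle to be the algebraic reduction to the form $\Delta x=\mu(\Delta x)$ and the accompanying bookkeeping — in particular the regrouping $B(\Delta x\otimes\Delta x)+\Delta B(\Delta x\otimes\Delta x)=\tilde B(\Delta x\otimes\Delta x)$, which is what lets the sharper constant $\tilde b=\|\tilde B\|$ rather than $\|B\|+\|\Delta B\|$ enter the estimate — together with confirming that $\xi_{\ast}$ of \eqref{xiaogen} is precisely the smaller root of $q$; once these are in place, the verification of the hypotheses of Leray--Schauder and the conclusion are routine.
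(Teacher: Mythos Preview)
Your proposal is correct and follows essentially the same route as the paper: derive the fixed-point equation $\Delta x=\mu(\Delta x)$ via the identity $\Delta a=-\Delta B(e\otimes e)$, bound $\|\mu(\Delta x)\|$ by the quadratic $g(\|\Delta x\|)$, check that hypothesis \eqref{ineq:del1} makes the discriminant of $q$ nonnegative with $\xi_{\ast}$ its smaller root, and conclude by Leray--Schauder on the closed ball of radius $\xi_{\ast}$. Your explicit observation that the regrouping $B(\Delta x\otimes\Delta x)+\Delta B(\Delta x\otimes\Delta x)=\tilde B(\Delta x\otimes\Delta x)$ is what produces the constant $\tilde b$ in the bound is exactly the point the paper uses implicitly.
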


\begin{proof}
Subtracting \eqref{perturequation} by \eqref{rest}, we get
\begin{equation}\label{newper}
    \Delta a=-\Delta B(e\otimes e).
\end{equation}
If the perturbed equation \eqref{perequation} has a solution $\tilde{x}=x^{\ast}+\Delta x$,
it can be rewritten as
\begin{align*}
x^{\ast}+\Delta x= a+\Delta a+ (B+\Delta B)[(x^{\ast}+\Delta x)\otimes (x^{\ast}+\Delta x)].
\end{align*}
Then using \eqref{newper} and the fact that $x^{\ast}$ satisfies \eqref{yuan}, we have
\begin{equation}\label{raodong}
    L \Delta x=\Delta B(x^{\ast}\otimes x^{\ast}-e\otimes e)+\varphi (\Delta x),
\end{equation}
where $L$ and $\varphi(\Delta x)$ is defined in \eqref{eq:L} and \eqref{eq:phi}, respectively.
In order to show \eqref{perequation} has a solution $\tilde{x}$, it suffices if we can show that \eqref{raodong} has a solution $\Delta x$.

By Lemma~\ref{le2}, we know that $L$ is nonsingular.
Consequently, the equality \eqref{raodong} can be rewritten as
\begin{equation}\label{keyeq}
\Delta x= \mu(\Delta x),
\end{equation}
where $\mu(\Delta x)$ is defined in \eqref{eq:mu}.
Then it follows
\begin{align}\label{ineq:dx}
\|\Delta x\|=\|\mu(\Delta x)\|\le \ell\delta d +2\ell \delta \|x^{\ast}\| \|\Delta x\| + \ell \tilde{b}\|\Delta x\|^2.
\end{align}

Consider a second order equation for $\xi$
\begin{equation}\label{erci}
  l\tilde{b}\xi^2 +(2l\delta \|x^{\ast}\|-1)\xi+ld\delta =0.
\end{equation}
Notice that $\delta$ satisfying \eqref{ineq:del1} implies that
$\delta$ satisfies
\begin{equation}\label{zhenggen}
    \delta\le \frac{1}{2\ell \|x^{\ast}\|}, \quad \quad \delta \leq \frac{(1-2\ell\delta \|x^{\ast}\|)^2}{4\ell^2\tilde{b} d}.
\end{equation}
Therefore, \eqref{erci} has two positive roots, and the smaller one can be given by
\begin{equation}\label{xiaogen}
\xi_{*}=\frac{1-2\ell\delta \|x^{\ast}\|-\sqrt{(1-2\ell\delta \|x^{\ast}\|)^2-4\ell^2\tilde{b}d\delta }}{2\ell\tilde{b}}
=\frac{2\ell d\delta}{1-2\ell\delta \|x^{\ast}\|+\sqrt{(1-2\ell \delta \|x^{\ast}\|)^2-4\ell^2\tilde{b}d\delta }}.
\end{equation}

Define
\begin{equation*}
    \mathcal{S}_{\xi_{*}}=\{ \Delta x\in \mathbb{R}^n\, : \, \|\Delta x\|\leq \xi_{*}\},
\end{equation*}
then it is easy to see that $\mathcal{S}_{\xi_{*}}$ is a nonempty bounded closed convex set in $\mathbb{R}^n$.
Noticing that $\mu(\Delta x)$ defined in \eqref{eq:mu} is a continuous mapping, and
for any $ \|\Delta x\|\leq \xi_{*}$,
\begin{equation*}
    \|\mu (\Delta x)\| \leq \ell d\delta  +2\ell\delta \|x^{\ast}\|\xi_{*} +\ell\tilde{b}\xi_{*}^2 =\xi_{*},
\end{equation*}
i.e., $\mu(\Delta x)$ maps  $\mathcal{S}_{\xi_{*}}$ into $\mathcal{S}_{\xi_{*}}$.
So by Leray-Schauder fixed point theorem, the function has a fixed point $\Delta x^{\ast}\in \mathcal{S}_{\xi_{*}}$,
which means \eqref{raodong} has a solution $\Delta x$.
The conclusion follows.
\end{proof}

Lemma \ref{yinli1} tells us that under some restrictions on $\delta=\|\Delta B\|$,
the perturbed equation \eqref{perequation} has a solution $\tilde{x}$, and
$\|\tilde{x}-x^{\ast}\|$ is bounded by $\xi_{\ast}$, which is a function of $\delta$.
Next we will show that under further restrictions  on $\delta$,
for any $\Delta x$ satisfying $\|\Delta x\|\le \xi_{\ast}$,
it holds $0< x^{\ast}+\Delta x < e$.
Then by Lemma~\ref{le1}, Step~2 can be accomplished.

\begin{lemma} \label{yinli2}
Follow the notations in Lemma~\ref{yinli1}, if $\delta$ satisfies
\begin{equation*}
    2\ell\delta \|x^{\ast}\|+\sqrt{(1-2\ell\delta \|x^{\ast}\|)^2-4l^2\tilde{b}d\delta }>
\max\{1-2\ell\tilde{b}( 1-\|x^{\ast}\|),1-2\ell\tilde{b}( 1-\|e-x^{\ast}\|)\},
\end{equation*}
then for any $\| \Delta x\|\le \xi_{\ast}$, it holds $0< x^{\ast}+\Delta x <  e$.
\end{lemma}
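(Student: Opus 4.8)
The plan is to reduce the two coordinatewise inequalities $x^{\ast}+\Delta x>0$ and $x^{\ast}+\Delta x<e$ to two scalar comparisons involving $\xi_{\ast}$, and then to observe that the hypothesis of the lemma is, after clearing the (positive) denominator of $\xi_{\ast}$, precisely those two comparisons. First I would unpack the infinity-norm bound: $\|\Delta x\|\le\xi_{\ast}$ means $-\xi_{\ast}\le\Delta x_i\le\xi_{\ast}$ for every coordinate $i$, hence $x_i^{\ast}-\xi_{\ast}\le x_i^{\ast}+\Delta x_i\le x_i^{\ast}+\xi_{\ast}$. Since $0\le x^{\ast}<e$, all entries of $x^{\ast}$ and of $e-x^{\ast}$ are nonnegative, so $\|x^{\ast}\|=\max_i x_i^{\ast}$ and $\|e-x^{\ast}\|=\max_i(1-x_i^{\ast})=1-\min_i x_i^{\ast}$. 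Consequently $x^{\ast}+\Delta x<e$ holds as soon as $\xi_{\ast}<\min_i(1-x_i^{\ast})=1-\|x^{\ast}\|$, and $x^{\ast}+\Delta x>0$ holds as soon as $\xi_{\ast}<\min_i x_i^{\ast}=1-\|e-x^{\ast}\|$. Thus it suffices to prove $\xi_{\ast}<1-\|x^{\ast}\|$ and $\xi_{\ast}<1-\|e-x^{\ast}\|$.

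Next I would bring in the description of $\xi_{\ast}$ from Lemma~\ref{yinli1}: it is the smaller root of the quadratic \eqref{erci}, i.e. $\xi_{\ast}=\bigl((1-2\ell\delta\|x^{\ast}\|)-S\bigr)/(2\ell\tilde{b})$ with $S=\sqrt{(1-2\ell\delta\|x^{\ast}\|)^{2}-4\ell^{2}\tilde{b}d\delta}\ge 0$, and the quantity $1-2\ell\delta\|x^{\ast}\|+S$ is strictly positive (this, and the reality of $S$, are inherited from the standing assumption \eqref{ineq:del1} via \eqref{zhenggen}). Writing $c$ for either target constant $1-\|x^{\ast}\|$ or $1-\|e-x^{\ast}\|$, and multiplying through by $2\ell\tilde{b}>0$, one gets the chain of equivalences
\begin{equation*}
\xi_{\ast}<c\;\Longleftrightarrow\;(1-2\ell\delta\|x^{\ast}\|)-S<2\ell\tilde{b}c\;\Longleftrightarrow\;2\ell\delta\|x^{\ast}\|+S>1-2\ell\tilde{b}c.
\end{equation*}
Taking $c=1-\|x^{\ast}\|$ and $c=1-\|e-x^{\ast}\|$ and combining the two resulting inequalities into a single statement with $\max$ on the right-hand side produces exactly the hypothesis displayed in the lemma. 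Hence that hypothesis is equivalent to $\xi_{\ast}<1-\|x^{\ast}\|$ and $\xi_{\ast}<1-\|e-x^{\ast}\|$ holding simultaneously, and the coordinatewise bounds from the first paragraph then give $0<x^{\ast}+\Delta x<e$ for every $\Delta x$ with $\|\Delta x\|\le\xi_{\ast}$.

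I do not expect a genuine obstacle; the argument is essentially bookkeeping once one notices that the lemma's hypothesis is a disguised form of the desired inequalities $\xi_{\ast}<1-\|x^{\ast}\|$ and $\xi_{\ast}<1-\|e-x^{\ast}\|$. The two points demanding a little care are: (i) the strict positivity of $1-2\ell\delta\|x^{\ast}\|+S$, needed so that the manipulation $\xi_{\ast}<c\Leftrightarrow\cdots$ runs in the correct direction — it holds because $1-2\ell\delta\|x^{\ast}\|=0$ and $S=0$ would force $\delta=0$ by \eqref{zhenggen}, contradicting $2\ell\delta\|x^{\ast}\|=1$; and (ii) the passage between coordinatewise minima/maxima and the infinity norm, which is legitimate precisely because $0\le x^{\ast}<e$ makes $x^{\ast}$ and $e-x^{\ast}$ entrywise nonnegative. (In the degenerate case $x^{\ast}=0$ allowed by $0\le x^{\ast}<e$, one has $1-\|e-x^{\ast}\|=0$, so the hypothesis would require $\xi_{\ast}<0$ and is never satisfied; the statement is then vacuous, and otherwise $0<x^{\ast}<e$ and the argument applies unchanged.)
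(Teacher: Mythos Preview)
Your proposal is correct and follows essentially the same approach as the paper: both reduce the conclusion to the pair of scalar inequalities $\xi_{\ast}<1-\|x^{\ast}\|$ and $\xi_{\ast}<1-\|e-x^{\ast}\|$, observe these are equivalent to the hypothesis via the closed form $\xi_{\ast}=\bigl((1-2\ell\delta\|x^{\ast}\|)-S\bigr)/(2\ell\tilde b)$, and then finish with the elementary coordinatewise sandwich $x_i^{\ast}-\xi_{\ast}\le x_i^{\ast}+\Delta x_i\le x_i^{\ast}+\xi_{\ast}$. Your write-up is simply more explicit than the paper's (which states the key implication $\xi_{\ast}<\min\{1-\|x^{\ast}\|,\,1-\|e-x^{\ast}\|\}$ without spelling out the algebra); note, incidentally, that your caveat~(i) is unnecessary, since the only positivity you actually use in the displayed chain of equivalences is $2\ell\tilde b>0$.
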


\begin{proof}
If $\delta$ satisfies
\[
2\ell\delta \|x^{\ast}\|+\sqrt{(1-2\ell\delta \|x^{\ast}\|)^2-4l^2\tilde{b}d\delta }>
\max\{1-2\ell\tilde{b}( 1-\|x^{\ast}\|),1-2\ell\tilde{b}( 1-\|e-x^{\ast}\|)\},
\]
then
\begin{align*}
\xi_{\ast}=\frac{1-2\ell\delta \|x^{\ast}\|-\sqrt{(1-2\ell\delta \|x^{\ast}\|)^2-4l^2\tilde{b}d\delta }}{2\ell\tilde{b}}
< {\min\{ 1-\|x^{\ast}\|, 1-\|e-x^{\ast}\|\}}.
\end{align*}
Consequently, for any $\|\Delta x\| \le \xi_{\ast}$, and for all $i=1,2,\dots, n$, we have
\begin{align*}
x^{\ast}_i+\Delta x_i &\le \|x^{\ast}\|+\xi_{\ast}< 1,\\
x^{\ast}_i-\Delta x_i & \ge (1-\|e-x^{\ast}\|) -\xi_{\ast}> 0,
\end{align*}
where $x_i^{\ast}$ and $\Delta x_i$ are the $i$-th entry of $x^{\ast}$ and $\Delta x$, respectively.
The conclusion follows immediately.
\end{proof}

Combining Lemma~\ref{yinli1} and Lemma~\ref{yinli2}, we get the following theorem,
which gives the perturbation bound for the extinction probability of the SPR-MBT.

\begin{theorem} \label{theo2}
Let $x^{\ast}$ be the extinction probability of an SPR-MBT.
Assume that the SPR-MBT is still an SPR-MBT after perturbation.
Let the perturbed QVE \eqref{perequation} be the QVE associated with the perturbed SPR-MBT.
Let $\Delta a$, $\Delta B$,  $\delta$, $\ell$, $\tilde{b}$ and $d$ be defined as in Lemma~\ref{yinli1}.
If $\delta$ satisfies
\begin{align*}
   \|x^{\ast}\|\delta +\sqrt{\tilde{b} d \delta}&\le\frac{1}{2\ell},\\
2\ell\delta \|x^{\ast}\|+\sqrt{(1-2\ell\delta \|x^{\ast}\|)^2-4l^2\tilde{b}d\delta } &>
\max\{1-2\ell\tilde{b}( 1-\|x^{\ast}\|),1-2\ell\tilde{b}( 1-\|e-x^{\ast}\|)\},
\end{align*}
then the extinction probability $\tilde{x}^{\ast}$ of the perturbed SPR-MBT
satisfies
\begin{equation*}
    \|\tilde{x}^{\ast}-x^{\ast}\|\leq \xi_{\ast},
\end{equation*}
where $\xi_{\ast}$ is defined in \eqref{xiaogen}.
\end{theorem}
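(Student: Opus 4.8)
The plan is to combine Lemma~\ref{yinli1} and Lemma~\ref{yinli2} with the uniqueness statement of Lemma~\ref{le1}; no new estimate is needed, only careful bookkeeping. First I would note that the first displayed inequality on $\delta$ in the theorem is exactly hypothesis~\eqref{ineq:del1} of Lemma~\ref{yinli1}. Applying that lemma produces a solution $\tilde{x}$ of the perturbed QVE~\eqref{perequation} together with the bound $\|\tilde{x}-x^{\ast}\|\le\xi_{\ast}$, where $\xi_{\ast}$ is the quantity in~\eqref{xiaogen}. Writing $\tilde{x}=x^{\ast}+\Delta x$, we thus have a vector $\Delta x\in\mathbb{R}^n$ with $\|\Delta x\|\le\xi_{\ast}$.

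Next, the second displayed inequality on $\delta$ is precisely the hypothesis of Lemma~\ref{yinli2}. Since the notation is inherited from Lemma~\ref{yinli1}, that lemma applies directly and gives $0<x^{\ast}+\Delta x<e$ for every $\Delta x$ with $\|\Delta x\|\le\xi_{\ast}$; in particular it applies to the specific $\Delta x$ obtained above, so $0<\tilde{x}<e$, i.e. $\tilde{x}\in\mathcal{E}_n$.

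Finally, I would invoke the standing assumption of the theorem that the perturbed MBT is again an SPR-MBT. Then Lemma~\ref{le1}, applied to the perturbed QVE~\eqref{perequation}, tells us that the only solution of~\eqref{perequation} in $\mathcal{E}_n$ is its extinction probability, namely its minimal nonnegative solution $\tilde{x}^{\ast}$. Since $\tilde{x}$ is such a solution lying in $\mathcal{E}_n$, we conclude $\tilde{x}=\tilde{x}^{\ast}$, whence $\|\tilde{x}^{\ast}-x^{\ast}\|=\|\tilde{x}-x^{\ast}\|\le\xi_{\ast}$, which is the asserted bound.

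The proof is essentially an assembly step, so the main thing to watch is that the hypotheses are matched verbatim: the first inequality must be read as~\eqref{ineq:del1} (entering via Step~1), the second as the hypothesis of Lemma~\ref{yinli2} (entering via Step~2), and the upgrade from ``the perturbed problem is an MBT'' (used in Lemma~\ref{yinli1}) to ``the perturbed problem is an SPR-MBT'' (used here) is exactly what licenses the application of Lemma~\ref{le1}. The genuine difficulty — the Leray--Schauder fixed-point construction and the quadratic norm estimate — has already been discharged inside Lemmas~\ref{yinli1} and~\ref{yinli2}, so there is no real obstacle left in their combination.
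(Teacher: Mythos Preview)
Your proposal is correct and follows essentially the same route as the paper: apply Lemma~\ref{yinli1} to obtain a solution $\tilde{x}$ with $\|\tilde{x}-x^{\ast}\|\le\xi_{\ast}$, use Lemma~\ref{yinli2} to place $\tilde{x}$ strictly between $0$ and $e$, and then invoke Lemma~\ref{le1} for the perturbed SPR-MBT to identify $\tilde{x}$ with $\tilde{x}^{\ast}$. Your added remark that the SPR-MBT hypothesis (rather than merely MBT) is precisely what licenses Lemma~\ref{le1} makes the logical dependence explicit.
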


\begin{proof}
By Lemma~\ref{yinli1}, we know that the perturbed equation \eqref{perequation} has a solution $\tilde{x}$,
which satisfies
\[
\|\tilde{x}-x^{\ast}\|\le \xi_{\ast}.
\]
For such $\tilde{x}$, using Lemma~\ref{yinli2}, we have $0< \tilde{x}< e$.
Then it follows from Lemma~\ref{le1} that $\tilde{x}$ is the minimal nonnegative solution $\tilde{x}^{\ast}$ of the perturbed QVE \eqref{perequation},
the conclusion follows immediately.
\end{proof}

Several remarks follow.
\begin{remark}{\rm~
\begin{enumerate}
\item
Theorem~\ref{theo2}  tells that when an SPR-MBT is slightly perturbed
such that the perturbed MBT is still an SPR-MBT,
the extinction probabilities of two MBTs are close,
and can be bounded by $\xi_{\ast}$, a function of the perturbation $\delta=\|\Delta B\|$.

\item
When $\delta$ is sufficiently small, direct calculation leads to
\[
\xi_{*}= \ell d\delta +O(\delta^2).
\]
By Theorem~\ref{theo2}, we can obtain the first order absolute perturbation bound for  $x^{\ast}$
\begin{equation}\label{yijiejuedui}
    \|\tilde{x}^{\ast}-x^{\ast}\|\leq \ell d \|\Delta B\|+O(\|\Delta B\|^2).
\end{equation}
and the first order relative perturbation bound for  $x^{\ast}$
\begin{equation}\label{yijiexiangdui}
    \frac{\|\tilde{x}-x^{\ast}\|}{\|x^{\ast}\|}\leq \ell \|B\| \frac{d}{\|x^{\ast}\|} \frac{\|\Delta B\|}{\|B\|}+O((\frac{\|\Delta B\|}{\|B\|})^2).
\end{equation}

According to the theory of condition developed by Rice \cite{rice}, we
define the condition number $\kappa$ of $x$ by
\begin{align}\label{tiaojianshu}
\kappa=\lim_{\epsilon \to \infty}\mbox{sup}\{\frac{\|\Delta x\|}{\epsilon \|x\|}\, : \,
x+\Delta x&=a+\Delta a+(B+\Delta B)((x+\Delta x)\otimes (x+\Delta x)),\notag\\
e&=a+\Delta a+(B+\Delta B)(e\otimes e),\|\Delta B\|\leq \epsilon\|B\|\}.
\end{align}
Then it follows that the condition number of $x^{\ast}$ can be bounded by
\begin{align*}
\kappa\le \frac{\|\tilde{x}^{\ast}-x^{\ast}\|}{\|x^{\ast}\|}\frac{\|B\|}{\delta}
\le  \frac{\ell d \|B\|}{\|x^{\ast}\|}+O(\delta).
\end{align*}
Therefore, we may use
\begin{equation}\label{kappastar}
    \tilde{\kappa}=\frac{\ell d \| B\|}{\|x^{\ast}\|}
\end{equation}
to measure the sensitivity of $x^{\ast}$.

\item For the extinction probability $x^{\ast}$ of an SPR-MBT, it satisfies \eqref{yuan}. i.e.,
\[
x^{\ast}=a+B(x^{\ast}\otimes x^{\ast}).
\]
Subtracting it from \eqref{rest}, we can obtain
\[
B(e\otimes e-x^{\ast}\otimes x^{\ast})=e-x^{\ast},
\]
which can be rewritten as
\[
B[(e+x^{\ast})\otimes I +I\otimes (e+x^{\ast})](e-x^{\ast})=2(e-x^{\ast}).
\]
Notice that $B[(e+x^{\ast})\otimes I +I\otimes (e+x^{\ast})]$ is nonnegative irreducible matrix, and $e-x^{\ast}>0$,
then it follows from the Perron-Frobenius theorem that
\[
\rho(B[(e+x^{\ast})\otimes I +I\otimes (e+x^{\ast})])=2.
\]
Notice also that $B[(e+x^{\ast})\otimes I +I\otimes (e+x^{\ast})]=R+(I-L)$, where $R$ and $L$ are defined in \eqref{def:R} and \eqref{eq:L}, respectively,
and both of them are nonnegative matrices,
then
\[
\rho(R)+\rho(I-L) \ge \rho(B[(e+x^{\ast})\otimes I +I\otimes (e+x^{\ast})])=2.
\]
Therefore, for close to critical SPR-MBT, as $\rho(R)$ goes to $1$ from the above,
$\rho(I-L)$ must go to $1$ from the below.
As a result, $L$ tends to be singular and $\ell=\|L^{-1}\|$ goes to infinity.
However, this does not necessarily lead to a large condition number,
since $\tilde{\kappa}$ is not only related with $\ell=\|L^{-1}\|$, but also $d=\|x^{\ast}\otimes x^{\ast}- e\otimes e\|$.
Numerically, we found that
the larger $\ell$ is, the smaller $d$ becomes.
As a result, $\tilde{\kappa}$ is relative small even if $\ell$ is large,
which means the computation of the extinction probability of close to critical SPR-MBT is NOT  very ill-conditioned as we thought.
This numerical phenomenon is interesting, however, we failed to give a theoretical proof.

\end{enumerate}
}\end{remark}

\subsection{An error bound}\label{subsec:err}
In this section, we provide an error bound for the extinction probability of the SPR-MBT,
which can be used measure the quality of an approximate solution of QVE \eqref{yuan}.

Let $\hat{x}$ be an approximate solution of QVE \eqref{yuan}, and denote the residual by
 \begin{equation}\label{jisuan}
 r=\hat{x}-a-B(\hat{x}\otimes \hat{x}).
\end{equation}
As an approximate extinction probability of an SPR-MBT,
it is natural to impose the following conditions on $\hat{x}$:
\begin{equation}\label{con1}
   0\le \hat{x}<e, \quad  \rho(B(\hat{x}\otimes I + I \otimes \hat{x}))<1.
\end{equation}
Notice that if $\hat{x}$ obtained via the depth, order, thickness algorithms or Newton iteration,
then $a\le \hat{x}\le x^{\ast}$, and hence \eqref{con1} holds since
$a\ge 0$, $x^{\ast}<e$, and $ \rho(B(\hat{x}\otimes I + I \otimes \hat{x}))\le  \rho(B({x}^{\ast}\otimes I + I \otimes {x}^{\ast}))<1$.

\bigskip

Now we give the error bound for the extinction probability of the SPR-MBT.
\begin{theorem}\label{theo3}
Let $x^{\ast}$ be the minimal nonnegative solution to QVE \eqref{yuan},
which associated with an  SPR-MBT.
For an approximate solution $\hat{x}$ of QVE \eqref{yuan},
denote the residual vector as in \eqref{jisuan} and let $\gamma=\|r\|$.
If $\hat{x}$ satisfies \eqref{con1} and $\gamma$ satisfies
\begin{align}
% \nonumber to remove numbering (before each equation)
 1-4\hat{\ell}^2b\gamma & \geq  0 ,\label{con21}\\
 \sqrt{1-4\hat{\ell}^2b\gamma} & > \max\{1-2\hat{\ell}b(1-\|e-\hat{x}\|),1-2\hat{\ell}b(1-\|\hat{x}\|)\},\label{con22}
\end{align}
where $\widehat{L}=I-B(\hat{x}\otimes I + I \otimes \hat{x})$, $\hat{\ell}=\|\widehat{L}^{-1}\|$ and $b=\|B\|$,
then
\begin{equation*}
    \|\hat{x}-x^{\ast}\|\leq \frac{2\hat{\ell}\gamma}{1+\sqrt{1-4\hat{\ell}^2b\gamma}}:=\omega_{\ast}.
\end{equation*}
\end{theorem}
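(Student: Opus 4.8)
The plan is to mirror the proof of Theorem~\ref{theo2}, with the approximate solution $\hat{x}$ and its residual $r$ playing the role that the perturbed coefficients played there.

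First I would recast the problem as a fixed-point equation for $\Delta x:=x^{\ast}-\hat{x}$. Since $x^{\ast}$ solves \eqref{yuan}, the definition \eqref{jisuan} gives $a=\hat{x}-r-B(\hat{x}\otimes\hat{x})$; substituting this into $x^{\ast}=a+B(x^{\ast}\otimes x^{\ast})$ and using the Kronecker identity $x^{\ast}\otimes x^{\ast}-\hat{x}\otimes\hat{x}=(\hat{x}\otimes I+I\otimes\hat{x})\Delta x+\Delta x\otimes\Delta x$, one obtains
\[
\widehat{L}\,\Delta x=-r+B(\Delta x\otimes\Delta x),\qquad \widehat{L}=I-B(\hat{x}\otimes I+I\otimes\hat{x}).
\]
Reversing this algebra shows, conversely, that whenever $\Delta x$ satisfies this equation, $\hat{x}+\Delta x$ is a solution of \eqref{yuan}. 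By the second condition in \eqref{con1}, $\rho(B(\hat{x}\otimes I+I\otimes\hat{x}))<1$, so $\widehat{L}$ is nonsingular with $\widehat{L}^{-1}=\sum_{k\ge0}(B(\hat{x}\otimes I+I\otimes\hat{x}))^{k}\ge0$; hence the equation is equivalent to $\Delta x=\nu(\Delta x)$, where $\nu(\Delta x)=\widehat{L}^{-1}\bigl(-r+B(\Delta x\otimes\Delta x)\bigr)$ is continuous.

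Next I would apply the Leray--Schauder fixed point theorem on the ball $\mathcal{S}_{\omega_{\ast}}=\{\Delta x\in\mathbb{R}^{n}:\|\Delta x\|\le\omega_{\ast}\}$. Taking the infinity norm and using $\|u\otimes v\|=\|u\|\,\|v\|$ gives $\|\nu(\Delta x)\|\le\hat{\ell}\gamma+\hat{\ell}b\|\Delta x\|^{2}$. Condition \eqref{con21} guarantees that the scalar quadratic $\hat{\ell}b\,\xi^{2}-\xi+\hat{\ell}\gamma=0$ has real nonnegative roots, the smaller one being exactly $\omega_{\ast}=2\hat{\ell}\gamma/(1+\sqrt{1-4\hat{\ell}^{2}b\gamma})$, which satisfies $\hat{\ell}\gamma+\hat{\ell}b\,\omega_{\ast}^{2}=\omega_{\ast}$. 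Thus $\nu$ maps the nonempty compact convex set $\mathcal{S}_{\omega_{\ast}}$ into itself, and the theorem yields a fixed point $\Delta x_{0}\in\mathcal{S}_{\omega_{\ast}}$.

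Finally I would identify $\Delta x_{0}$ with $x^{\ast}-\hat{x}$. By the converse step, $y:=\hat{x}+\Delta x_{0}$ solves \eqref{yuan}. A short manipulation shows that \eqref{con22} is equivalent to $\omega_{\ast}<\min\{1-\|\hat{x}\|,\ 1-\|e-\hat{x}\|\}$. Since $0\le\hat{x}<e$ yields $\hat{x}_{i}\le\|\hat{x}\|$ and $\hat{x}_{i}\ge1-\|e-\hat{x}\|$ for every $i$, from $\|\Delta x_{0}\|\le\omega_{\ast}$ we get $y_{i}\le\|\hat{x}\|+\omega_{\ast}<1$ and $y_{i}\ge(1-\|e-\hat{x}\|)-\omega_{\ast}>0$, i.e.\ $0<y<e$. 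By Lemma~\ref{le1} the only such solution of \eqref{yuan} is $x^{\ast}$, so $y=x^{\ast}$ and hence $\|x^{\ast}-\hat{x}\|=\|\Delta x_{0}\|\le\omega_{\ast}$. The Kronecker algebra and the scalar quadratic bookkeeping are routine; the step needing the most care — and the reason \eqref{con1} and \eqref{con22} are imposed — is this last one: Leray--Schauder only produces \emph{some} fixed point in $\mathcal{S}_{\omega_{\ast}}$, so one must first check it gives a genuine QVE solution strictly inside the box $\{0<x<e\}$ and then invoke the uniqueness in Lemma~\ref{le1} to conclude it equals $x^{\ast}-\hat{x}$, which is what turns the self-map property into the bound without ever analyzing the larger root of the quadratic.
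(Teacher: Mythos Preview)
Your proposal is correct and follows essentially the same route as the paper: derive the fixed-point equation $\widehat{L}\,\Delta x=\pm r\mp B(\Delta x\otimes\Delta x)$, apply Leray--Schauder on the ball of radius $\omega_{\ast}$ coming from the scalar quadratic $\hat{\ell}b\,\xi^{2}-\xi+\hat{\ell}\gamma=0$, and then use condition~\eqref{con22} together with Lemma~\ref{le1} to identify the resulting solution with $x^{\ast}$. The only cosmetic difference is your sign convention $\Delta x=x^{\ast}-\hat{x}$ (the paper takes $\Delta x=\hat{x}-x^{\ast}$), and you spell out the final box-inclusion argument that the paper delegates to ``similar as the proof of Lemma~\ref{yinli2}.''
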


\begin{proof}
Subtracting (\ref{yuan}) from (\ref{jisuan}) , we get
\begin{equation}\label{dxr}
    \hat{x}-x^{\ast}-B((\hat{x}-x^{\ast})\otimes \hat{x}+x^{\ast}\otimes(\hat{x}-x^{\ast}))=r.
\end{equation}
Let $\Delta x=\hat{x}-x^{\ast}$, then \eqref{dxr} can be rewritten as
%\begin{equation*}
%    \Delta x-B(\Delta x \otimes \hat{x}+(\hat{x}-\Delta x) \otimes \Delta x)=r.
%\end{equation*}
%Then we have
\begin{equation*}
 \widehat{L}\Delta x=   [I-B(\hat{x}\otimes I + I \otimes \hat{x})]\Delta x=r-B(\Delta x \otimes \Delta x).
\end{equation*}
Using the assumption that $\rho(B(\hat{x}\otimes I + I \otimes \hat{x}))<1$, we know $\widehat{L}$ is nonsingular.
Consequently,
\[
\Delta x=\widehat{L}^{-1}(r-B(\Delta x \otimes \Delta x)),
\]
which leads to
\begin{equation*}
\|\Delta x\|\leq \|\widehat{L}^{-1}\|\|r\|+\|\widehat{L}^{-1}\|\|B\|\|\Delta x\|^2=\hat{\ell}\gamma+\hat{\ell}b\|\Delta x\|^2.
\end{equation*}
Define
\begin{equation*}
\hat{\mu}(\Delta x)=\widehat{L}^{-1}(r-B(\Delta x \otimes \Delta x).
\end{equation*}
Similar as in the proof of Lemma~\ref{yinli1}, we can show that if
\begin{equation*}
    1-4\hat{\ell}^2b\gamma \geq 0,
\end{equation*}
then for any $\|\Delta x\|\leq \omega_{*}$, it holds $\|\hat{\mu}(\Delta x)\|\le \|\omega_{\ast}$,
where
\begin{equation}\label{omega}
   \omega_*= \frac{1-\sqrt{1-4\hat{\ell}^2b\gamma}}{2\hat{\ell}b}=\frac{2\hat{\ell}\gamma}{1+\sqrt{1-4\hat{\ell}^2b\gamma}}
\end{equation}
is the smaller positive root of the second order equation
\begin{equation}\label{jisuanerci}
    \hat{\ell}b\omega^2-\omega +\hat{\ell}\gamma=0.
\end{equation}
So by Leray-Schauder fixed-point theorem again, the continous function $\hat{\mu}(\Delta x)$ has a fixed point,
in the bounded closed convex set $\mathcal{S}_{\omega_{*}} \subset\mathbb{R}^n$ defined by
\begin{equation*}
    \mathcal{S}_{\omega_{*}}=\{ \Delta x\in \mathbb{R}^n \, : \, \|\Delta x\|\leq \omega_{*}\}.
\end{equation*}
In another word, there exists a $\Delta x^{\ast}\in \mathcal{S}_{\omega_{*}}$ such that
\begin{equation*}
\Delta x^{\ast}=\widehat{L}^{-1}(r-B(\Delta x^{\ast} \otimes \Delta x^{\ast}).
\end{equation*}
Combing this equation with equation (\ref{jisuan}), we can get that $x=\hat{x}-\Delta x^{\ast}$ satisfies
\begin{equation*}
    x-a-B(x\otimes x)=0.
\end{equation*}

According to Lemma \ref{le1}, if we can show that $0\le \hat{x}-\Delta x^{\ast}<e$,
then $\hat{x}-\Delta x^{\ast}$ is the minimal nonnegative solution of QVE (\ref{yuan}).
Similar as the proof of Lemma~\ref{yinli2}, using \eqref{con22}, we can show $0< \hat{x}-\Delta x^{\ast}<e$,
which completes the proof.
\end{proof}

\begin{remark}{\rm~
\begin{enumerate}
\item
The error bound $\omega_{\ast}=\omega_{\ast}(\gamma)$ is a function of the residual norm $\gamma$.
A fundamental requirement for the sharpness of the error bound is that $\omega_{\ast}(0)=0$,
which is satisfied by $\omega_{\ast}$ given in Theorem~\ref{theo3}.

\item
Direct calculation show that
\[
\omega_{\ast}=\hat{\ell}\gamma+O(\gamma^2).
\]
Consequently, we may use
\begin{equation*}
    \hat{\ell}\gamma=\|[I-B(\hat{x}\otimes I + I \otimes \hat{x})]^{-1}\|\|r\|
\end{equation*}
to estimate the error $\|\hat{x}-x^{\ast}\|$ when $\gamma$ is sufficiently small.
\end{enumerate}
}\end{remark}

\section{Numerical Experiments}\label{sec:numer}

In this section,
we illustrate our perturbation bound and error bound by a few numerical examples.
Using MATLAB's Symbolic Math Toolbox,
we adopt the Newton's method \cite{newton} with the stopping criterion
  \begin{equation*}
    \|a+B(x\otimes x)-x\|_2<10^{-50}
\end{equation*}
to evaluate the ``exact'' solution $x^{\ast}$ of QVE \eqref{yuan}.
Similarly, we evaluate $\tilde{x}^{\ast}$ from QVE \eqref{perequation}.

\paragraph{QVE Data}
We take the test data from  \cite{detb}.
The coefficient vector $a$ and matrix $B$ in QVE \eqref{yuan}
can be given by
\begin{align*}
a=-D_0^{-1}d,\qquad B=-D_0^{-1}R,
\end{align*}
where
\begin{align*}
   D_0=10^{-3}\left[\begin{smallmatrix}
  \cdot & 6 & 0 & 0 & 0 & 0 & 0 & 0 & 0 \\
  0 & \cdot & 6 & 0 & 0 & 0 & 0 & 0 & 0 \\
  0 & 0 & \cdot & 6 & 0 & 0 & 0 & 0 & 0 \\
  6 & 0 & 0 & \cdot & 1 & 1 & 0 & 0 & 0 \\
  0 & 0 & 0 & 0 & \cdot & 0 & 0 & 0 & 0 \\
  0 & 0 & 0 & 0 & 0 & \cdot & 6 & 0 & 0 \\
  0 & 0 & 0 & 0 & 0 & 0 & \cdot & 6 & 0 \\
  0 & 0 & 0 & 0 & 0 & 0 & 0 & \cdot & 6 \\
  1 & 0 & 0 & 0 & 1 & 6 & 0 & 0 & \cdot
\end{smallmatrix}\right],\quad
d=\diag(0,0,0,0,1,0,0,0,0),\quad
R=[R_{i,jk}]=[(D_1)_{ii}(P_1)_{ij}(P_0)_{ik}]
\end{align*}
with
\begin{align*}
D_1=10^{-2}\diag( p,  p ,  p ,  p,   5 ,   4,  4 ,  4 ,  4 ),\quad
    P_1=\left[\begin{smallmatrix}
  1 & 0 & 0 & 0 & 0 & 0 & 0 & 0 & 0 \\
  0 & 1 & 0 & 0 & 0 & 0 & 0 & 0 & 0 \\
  0 & 0 & 1 & 0 & 0 & 0 & 0 & 0 & 0 \\
  0 & 0 & 0 & 1 & 0 & 0 & 0 & 0 & 0 \\
  0.1 & 0 & 0 & 0 & 0.9 & 0 & 0 & 0 & 0 \\
  0 & 0 & 0 & 0 & 1 & 0 & 0& 0 & 0 \\
  0 & 0 & 0 & 0 & 1 & 0 & 0 & 0 & 0 \\
  0 & 0 & 0 & 0 & 1 & 0 & 0 & 0 & 0 \\
  1 & 0 & 0 & 0 & 1 & 0 & 0 & 0 & 0
\end{smallmatrix}\right],\quad
 P_0=\left[\begin{smallmatrix}
  0 & 0 & 0 & 0 & 1 & 0 & 0 & 0 & 0 \\
  0 & 0 & 0& 0 & 1 & 0 & 0 & 0 & 0 \\
  0 & 0 & 0 &0 & 1& 0 & 0 & 0 & 0 \\
  0 & 0 & 0 & 0 & 1 & 0 & 0 & 0 & 0 \\
  0.1 & 0 & 0 & 0 & 0.9 & 0 & 0 & 0 & 0 \\
  0 & 0 & 0 & 0 & 0 & 1 & 0 & 0 & 0 \\
  0 & 0 & 0 & 0 & 0 & 0 & 1 & 0 & 0 \\
  0 & 0 & 0 & 0 & 0 & 0 & 0 & 1 & 0 \\
  0 & 0 & 0 & 0 & 0 & 0 & 0 & 0 & 1
\end{smallmatrix}\right],
\end{align*}
and the diagonal of $D_0$ is such that $D_0e + D_1e + d = 0$, $p$ is a real parameter.

\begin{example}
In this example,
we add two kinds of perturbations to QVE \eqref{yuan} to illustrate the sharpness of our perturbation bound.
The first one is the structured perturbation given below:
\begin{equation*}
    \Delta B=\eta  B ,\quad  \Delta a=-\Delta B(e\otimes e),
\end{equation*}
and the second one is the randomly generated perturbation:
\begin{equation*}
    \Delta B=\rand(n,n^2),\quad \Delta B=\eta \frac{\Delta B}{\|\Delta B\|}\|B\|, \quad \Delta a=-\Delta B(e\otimes e),
\end{equation*}
where  $\rand(n,n^2)$ is a $n$-by-$n^2$ matrix whose elements are drawn from the standard uniform distribution on the open interval $(0,1)$.

For different real parameters $p$ and $\eta$,
we list the numerical results for both cases in Table~\ref{tab1} and Table~\ref{tab2}, respectively,
where $\rho(R)$ is the spectral radius of $R$ defined in \eqref{def:R}, $\ell$, $d$ are defined in \eqref{lbdd},
$\tilde{\kappa}$ is the upper bound for the condition number of $x^{\ast}$ defined in \eqref{kappastar},
and  $\eta=\frac{\|\Delta B\|}{\|B\|}$ is the relative perturbation in $B$.

\begin{table}[h]
\centering
\begin{tabular}{|c|c|c|c|c|c|c|c|}
                        \hline
                         % after \\: \hline or \cline{col1-col2} \cline{col3-col4} ...
                         $p$  &$\rho(R)$ & $\ell$&$d$&$\tilde{\kappa}$&$\eta$ & $\frac{\xi_{\ast}}{\|x^{\ast}\|}$ & $\frac{\|\tilde{x}^{\ast}-x^{\ast}\|}{\|x^{\ast}\|}$  \\
\hline\hline
                         20.0&1.0095&1,29e+2&0.99&1.29e+2&1.0e-8&1.30e-6&1.02e-6\\
                         &  & & & &1.0e-9 & 1.29e-7 & 1.02e-7 \\
\hline
                         10.0 &1.0084&1.64e+2&0.96 & 1.59e+2&1.0e-8 & 1.59e-6 & 1.01e-6 \\
                          &  & & & &1.0e-9 & 1.59e-7 & 1.01e-7 \\
\hline
                         5.0 &1.0065&2.44e+2&0.85&2.09e+2& 1.0e-8 & 2.09e-6 &  9.85e-7\\
                         & & & && 1.0e-9 & 2.09e-7 & 9.85e-8 \\
\hline
                           2.0 &1.0028&7.36e+2&0.54&3.95e+2& 1.0e-8 & 3.96e-6 &  1.16e-6\\
                         & & & && 1.0e-9 & 3.95e-7 & 1.16e-7 \\
\hline
                         0.90 &1.0001&1.78e+4&3.86e-2&6.85e+2& 1.0e-8 & 7.99e-6 &  1.43e-6\\
                         & & & && 1.0e-9 & 6.94e-7 & 1.43e-7 \\
                         \hline
                       \end{tabular}
\caption{Numerical results for the structured perturbations}\label{tab1}
\end{table}

\begin{table}[h]
\centering
\begin{tabular}{|c|c|c|c|c|c|c|c|}
                        \hline
                         % after \\: \hline or \cline{col1-col2} \cline{col3-col4} ...
                        $p$  &$\rho(R)$ & $\ell$&$d$&$\tilde{\kappa}$&$\eta$ & $\frac{\xi_{\ast}}{\|x^{\ast}\|}$ & $\frac{\|\tilde{x}^{\ast}-x^{\ast}\|}{\|x^{\ast}\|}$  \\
\hline\hline
                         20.0&1.0095&1,29e+2&0.99&1.29e+2&1.0e-8&1.30e-6&1.06e-6\\
                         &  & & & &1.0e-9 & 1.29e-7& 1.10e-7  \\
\hline
                        10.0 &1.0084&1.64e+2&0.96 & 1.59e+2&1.0e-8 & 1.59e-6 & 1.32e-6 \\
                          &  & & & &1.0e-9 & 1.59e-7 & 1.27e-7 \\
\hline
                         5.0 &1.0065&2.44e+2&0.85&2.09e+2& 1.0e-8 & 2.09e-6 & 1.60e-6\\
                         & & & && 1.0e-9 & 2.09e-7 & 1.74e-7 \\
\hline
                           2.0 &1.0028&7.36e+2&0.54&3.95e+2& 1.0e-8 & 3.96e-6 & 2.89e-6\\
                         & & & && 1.0e-9 & 3.95e-7 & 2.71e-7 \\
\hline
                         0.90 &1.0001&1.78e+4&3.86e-2&6.85e+2& 1.0e-8 & 7.99e-6 & 4.40e-6\\
                         & & & && 1.0e-9 & 6.94e-7 & 4.67e-7 \\
                         \hline
                       \end{tabular}
\caption{Numerical results for the randomly generated perturbations}\label{tab2}
\end{table}

We can see from Tables~\ref{tab1} and \ref{tab2} that as $p$ decreases, the spectral radius of $R$ get closer to $1$ from the above,
meanwhile, $\ell$ becomes larger and $d$ becomes smaller,
what's more important, the upper bound for the condition number $\tilde{\kappa}$ though becomes larger, remains the same order of magnitude.
In all cases, regardless of the kinds of perturbations, $p$ and $\eta$,
our perturbation bound  $\frac{\xi_{\ast}}{\|x^{\ast}\|}$ is in the same order of  $\frac{\|\tilde{x}^{\ast}-x^{\ast}\|}{\|x^{\ast}\|}$.
%in most cases,  $\frac{\xi_{\ast}}{\|x^{\ast}\|}$ are less than 2 times of  $\frac{\|\tilde{x}^{\ast}-x^{\ast}\|}{\|x^{\ast}\|}$.

\end{example}

\begin{example}

W use this example to demonstrate that error bound $\omega_{*}$ is fairly good for estimating $\|\hat{x}-x^{\ast}\|$,
where $\hat{x}$ is an approximate solution of QVE \eqref{yuan} obtained via Newton's iteration\cite{newton},
$x^{\ast}$ is minimal nonnegative solution of QVE \eqref{yuan}.

For different $p$, we list the numerical results in Table~\ref{tab3}.
\begin{table}[h]
\centering
\begin{tabular}{|c|c|c|c|c|}
                        \hline
                         % after \\: \hline or \cline{col1-col2} \cline{col3-col4} ...
                         $p$  &$\|r\|$ & $\|\widehat{L}^{-1}\|$& $\omega_{*}$ & $\|\hat{x}-x^{\ast}\|$   \\\hline
                         2.0&  4.9e-9& 736 & 3.60e-6&1.83e-6\\
                         4.0 &  3.5e-9&289& 1.01e-6 & 6.51e-7 \\
                         6.0&  3.3e-7&215& 7.22e-5 & 4.93e-5\\
                         8.0 &5.0e-8& 182 & 9.17e-6 & 6.75e-6 \\
                         10.0 &8.3e-6& 163 & 2.03e-3 &  1.02e-3\\
                         \hline
\end{tabular}
\caption{Comparison between the error bound $\omega_{*}$ and the error $\|\hat{x}^{\ast}-x^{\ast}\|$ }\label{tab3}
\end{table}
We can see from Table~\ref{tab3} that the quality of an approximation $\hat{x}$ can be well measured by $\omega_{*}$ and
 we can roughly estimate the error $\|\hat{x}^{\ast}-x^{\ast}\|$ by $\|\widehat{L}^{-1}\|\|r\|$,
where $\widehat{L}$ is defined in Theorem~\ref{theo3}.

%To see our bounds more clearly, we show on Figure 4.1 $\|\widehat{L}^{-1}\|*\|r\|$ (denoted by ``rough bound"),
%$\|\hat{x}-x_*\|$ (denoted by ``real error"), $\omega_{*}$ (denoted by `` error bound"), $\|r\|$ (denoted by ``residual") when we change the parameter $p$. We see that our estimation method is effective.
%\begin{figure}
%\begin{center}
%\includegraphics[width=12cm]{schauder.eps}
%\caption{comparison between our error bounds and the real error $\|\hat{x}-x_*\|$}
%\end{center}
%\end{figure}

\end{example}

\section{Conclusions}\label{sec:conclusion}
In this paper, we study the perturbation analysis for the extinction probability of an SPR-MBT.
We derive a perturbation bound for the extinction probability of an SPR-MBT,
and  a upper bound for the condition number of the extinction probability are also obtained.
Furthermore, an error bound is given for the extinction probability,
which can be used to measure the quality of the approximate extinction probability.
Numerical tests show that both the perturbation bound and the error bound are fairly sharp.

\end{document}